\newtheorem{theorem}{Theorem}
\newtheorem{lemma}[theorem]{Lemma}
\theoremstyle{definition}
\newtheorem{definition}[theorem]{Definition}
\newtheorem{remark}[theorem]{Remark}
\theoremstyle{remark}
\begin{document}

\newenvironment{prooff}{\medskip \par \noindent {\it Proof}\ }{\hfill
$\square$ \medskip \par}
    \def\sqr#1#2{{\vcenter{\hrule height.#2pt
        \hbox{\vrule width.#2pt height#1pt \kern#1pt
            \vrule width.#2pt}\hrule height.#2pt}}}
    \def\square{\mathchoice\sqr67\sqr67\sqr{2.1}6\sqr{1.5}6}
\def\pf#1{\medskip \par \noindent {\it #1.}\ }
\def\endpf{\hfill $\square$ \medskip \par}
\def\demo#1{\medskip \par \noindent {\it #1.}\ }
\def\enddemo{\medskip \par}
\def\qed{~\hfill$\square$}

\title[]
{A NOTE ON OVERTWISTED CONTACT STRUCTURES}

\author{EL\.IF YILMAZ}

\address{Department of Mathematics, Columbia University, New York, NY
10027 and Department of Mathematics, Middle East Technical University,
06531 Ankara, Turkey}

\email{eyilmaz@math.columbia.edu and elyilmaz@metu.edu.tr}


\thanks{The author is supported by T\"UB\.ITAK 2214 Grant.}

\date{\today}

\begin{abstract}
In this note, we use the recent work of Honda-Kazez-Mati\'c ~\cite{hkm} to prove that a closed contact $3$-manifold admitting a compatible open book decomposition with a nontrivial monodromy which can be presented as a product of left handed Dehn twists is overtwisted.
\end{abstract}

\maketitle
  \setcounter{secnumdepth}{1}
\setcounter{section}{0}

\section{Introduction}

There is a connection between contact structures and open book decompositions. This connection can be used to classify the contact structures. The monodromy of an open book has a distinguishing role in this classification. By the work of E. Giroux, we know the relation between the Stein fillable contact structures and the monodromy of that supporting open book: For a Stein fillable contact structure, there is at least an open book supporting that contact structure with monodromy product of right handed Dehn twists. Hence we try to understand the relation between mapping classes in the mapping class group of a page and the corresponding contact structures. In this note, we try to give a relation of overtwisted contact structures with the monodromy and find a sufficient condition for overtwistedness by using the \emph{right-veering diffeomorphisms} as studied in the recent work of Honda-Kazez-Mati\'c ~\cite{hkm}. We prove that:

\begin{theorem} \label{cor1} Let $S$ be a compact oriented surface with boundary.
If the monodromy of an open book decomposition $(S,h)$ of a contact $3$-manifold
$ (Y,\xi) $ is a product of left handed Dehn twists about homotopically nontrivial simple closed curves, then the contact structure $ \xi $ supported by the open book
$(S,h)$ is overtwisted.
\end{theorem}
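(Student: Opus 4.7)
The plan is to invoke the main theorem of Honda-Kazez-Mati\'c \cite{hkm}, which characterizes tightness in terms of monodromy: a closed contact $3$-manifold $(Y,\xi)$ is tight if and only if every compatible open book has right-veering monodromy. Equivalently, $\xi$ is overtwisted precisely when some compatible open book fails to have right-veering monodromy. Since $(S,h)$ already supports $\xi$, it suffices to prove that $h$ itself is not right-veering.

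I would argue by contradiction. Write $h = T_{c_1}^{-1} T_{c_2}^{-1} \cdots T_{c_n}^{-1}$ with each $c_i$ a homotopically nontrivial simple closed curve, and suppose $h$ is right-veering. The inverse $h^{-1} = T_{c_n} T_{c_{n-1}} \cdots T_{c_1}$ is then a product of right-handed Dehn twists about essential simple closed curves. By \cite{hkm}, every right-handed Dehn twist about an essential simple closed curve is right-veering, and the set of right-veering mapping classes forms a monoid under composition. Hence $h^{-1}$ is also right-veering. For any properly embedded arc $\alpha \subset S$, applying right-veering of $h$ to the arc $h^{-1}(\alpha)$ gives $\alpha = h(h^{-1}(\alpha)) \geq h^{-1}(\alpha)$ in the boundary-point comparison, while right-veering of $h^{-1}$ applied directly to $\alpha$ gives $h^{-1}(\alpha) \geq \alpha$. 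Antisymmetry of the order forces $h^{-1}(\alpha) = \alpha$ up to isotopy rel $\partial S$, so $h$ fixes every isotopy class of properly embedded arc. A standard argument — choosing a family of arcs cutting $S$ into disks and applying the Alexander trick — then yields $h = \mathrm{id}$, contradicting the nontriviality of the monodromy.

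The main obstacle, and the step where I would be most careful, is the transition from "$h$ and $h^{-1}$ both right-veering" to "$h$ is trivial": one must use antisymmetry of the right-veering comparison on isotopy classes of arcs, which is essentially the content of the \emph{right-veering monoid} $\mathrm{Veer}(S,\partial S)$ meeting its inverse only at the identity, a fact made explicit in \cite{hkm}. Once that lemma and the two closure properties (right-handed twists are right-veering, and the right-veering set is a monoid) are invoked, the proof is short and structural. It is worth noting that the hypothesis "homotopically nontrivial" on each $c_i$ is needed only to ensure that the individual Dehn twists are nontrivial mapping classes; the argument itself does not distinguish between boundary-parallel and non-boundary-parallel curves, as it bypasses any direct analysis of a single left-handed twist and uses only the monoid structure together with the nontriviality of $h$.
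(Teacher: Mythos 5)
Your route is genuinely different from the paper's: you reduce via Theorem~\ref{Th3} to showing $h$ is not right-veering, and instead of analyzing a single left twist you play $h$ against $h^{-1}=t_{c_n}\cdots t_{c_1}\in Dehn^+(S,\partial S)\subset Veer(S,\partial S)$, using antisymmetry of the Honda--Kazez--Mati\'c comparison on isotopy classes of arcs to force $h$ to fix every arc class, and the Alexander method to conclude $h=\mathrm{id}$ in $\mathrm{Mod}(S,\partial S)$. Those steps are individually sound. The gap is in how the contradiction terminates: you invoke ``the nontriviality of the monodromy,'' but the theorem as stated assumes only that each \emph{curve} $c_i$ is homotopically nontrivial, not that $h$ is a nontrivial mapping class. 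Your closing remark --- that essentiality of the $c_i$ is needed ``only to ensure that the individual Dehn twists are nontrivial mapping classes'' --- does not bridge this: a product of nontrivial group elements can perfectly well be trivial, so nontriviality of each $t_{c_i}^{-1}$ says nothing about $h$. What your argument actually requires is that a nonempty product of left Dehn twists about essential simple closed curves is never isotopic to the identity rel $\partial S$. That is true, but it is a statement with real content, and it is precisely where strictness must enter: if such a relation existed, your dichotomy would land in the case $h=\mathrm{id}$, where the open book $(S,\mathrm{id})$ supports the \emph{tight} contact structure on a connected sum of copies of $S^1\times S^2$, and the theorem's conclusion would fail.

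The paper supplies exactly this missing strictness by direct computation rather than by formal monoid considerations: after conjugating $c_1$ to a standard model curve (right-veering is conjugation-invariant, Lemma~\ref{r5}), an explicit properly embedded arc $\alpha$ with $t_a^{-1}(\alpha)$ strictly to the left shows that a single left twist about an essential curve is never right-veering (Lemma~\ref{r6}); the monoid trick of Lemma~\ref{r7} (if $t_a^{-1}f^{-1}$ were right-veering with $f\in Veer(S,\partial S)$, then $t_a^{-1}=t_a^{-1}f^{-1}f$ would be right-veering) then shows $h$ itself is not right-veering, with $h\neq\mathrm{id}$ arriving as a byproduct (the identity is right-veering) rather than as an input. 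To repair your proof, either import the hypothesis from the paper's abstract that the monodromy is nontrivial as a mapping class --- which yields a clean variant of the theorem that does not even need the curves essential, but is not the statement under review --- or prove that nonempty positive twist products about essential curves are nontrivial, for which the natural tool is the paper's Lemma~\ref{r6}; at that point the paper's shorter argument is available anyway.
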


In Section 2, we will give some preliminaries about contact strucures,
open book decompositions and right-veering diffeomorphisms.
In Section 3 we will prove the main theorem.

\textbf{Acknowledgements.}
I would like to thank Mustafa Korkmaz, Andr\'as Stipsicz and \.Inan\c{c} Baykur for helpful discussions.

\section{Preliminaries}
\subsection{Contact Structures and Open Book Decompositions}

A \emph{contact form} on an oriented $3$-manifold $M$ is a one-form $\alpha$ such that $\alpha \wedge d\alpha>0$ everywhere with respect to the orientation of $M$. A \emph{contact structure} on $M$ is a cooriented plane field $\xi$ which is the kernel of some contact form. A \emph{contact manifold} is a manifold equipped with a contact structure.

\par
An open book decomposition of a closed oriented $3$-manifold $M$ is a pair $(S,h)$,
where $S$ is an oriented surface with boundary and $h:S\rightarrow S$ is a diffeomorphism
restricting to the identity on the boundary of $S$. Take $S\times \left[0,1 \right]$ and
identify $(h(x),0)$ with $(x,1)$ for $x\in S$ and $(y,0)$ with $(y,t)$ for $y\in \partial S$
and $t\in \left[0,1 \right]$. Identify this obtained $3$-manifold with $M$;
with this identification $S\times t$, $ t\in \left[0,1 \right]$, is called a \textit{page},
$\partial S $ is called a \textit{binding} and $h$ is the \textit{monodromy} of the open book.

\par A contact structure $\xi$ on a $3$-manifold $M$ is \emph{supported} by an open book of $M$ if $\xi$ is the kernel of a contact form $\alpha$ such that $d\alpha$ is a volume form on every page, the binding is transverse to $\xi$ and the orientation of the binding induced by $\alpha $ agrees with the boundary orientation of the pages.

\subsection{Right-veering Diffeomorphisms}

Let $S$ be a compact connected oriented surface with boundary.
Define the mapping class group of $S$ to be the isotopy classes of (automatically orientation-preserving)
self-diffeomorphisms of the surface $S$ which restrict to the identity
on $ \partial S$ and denote it by $Mod(S,\partial S)$. In ~\cite{hkm}, Honda-Kazez-Mati\'c
introduced the notion of \emph{right-veering diffeomorphisms} and the monoid
$Veer(S,\partial S) \subset Mod(S,\partial S)$ of right-veering diffeomorphisms of $S$.
We recall these notions.

\begin{definition}
Let $\alpha$ and $\beta$ be two properly embedded arcs with a common initial point
$x\in \partial S$. Recall that an arc $\alpha$ on $S$ is properly embedded if $\alpha$
is embedded and $\partial \alpha=\alpha \cap \partial S$. Isotope $\alpha$ and $\beta$
fixing the endpoints so that they intersect transversally with the least possible number
of points and that they are transverse to $ \partial S$. We say that ``$ \beta $ is to
the right of $\alpha$" if $ \alpha =  \beta $ or the tangent vectors $( \beta', \alpha')$ gives the orientation
of $S$ at $x$, and ``$ \beta $ is to the left of $\alpha$" otherwise.

\end{definition}

\begin{definition}
A diffeomorphism $h:S\to S$ is called \emph{right-veering} if for every choice of
basepoint $x \in \partial S$ and every choice of properly embedded arc $\alpha$ based at $x$, $h(\alpha)$ is
to the right of $ \alpha $ at $x$.

It is easy to see that for two isotopic self diffeomorphism $h_1$ and $h_2$ of $S$,
$h_1$ is right veering if and only if $h_2$ is right veering. Therefore,
one can talk about right veering mapping classes. The subset of $Mod(S,\partial S)$ consisting of
right veering elements is denoted by $Veer(S,\partial S)$.
It follows that $Veer(S,\partial S)$ is a monoid.

\end{definition}

In ~\cite{hkm}, it was shown that the monoid $Dehn^+(S,\partial S)\subset Mod(S,\partial S)$
consisting of products of right Dehn twists is a submonoid of  $Veer(S,\partial S)$.
The main result of ~\cite{hkm} is the following theorem.

\begin{theorem}[\cite{hkm}] \label{Th3}
A contact structure  $ (M, \xi) $ is tight
if and only if all of its open book decompositions $(S,h)$ have right-veering monodromy.
\end{theorem}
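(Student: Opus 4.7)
The plan is to apply Theorem \ref{Th3} in its contrapositive form: $(Y,\xi)$ is overtwisted iff at least one of its compatible open book decompositions has non-right-veering monodromy. Since $(S,h)$ is already given as a compatible open book for $\xi$, it suffices to prove $h \notin Veer(S,\partial S)$.

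The key observation is that the inverse of $h$ is automatically right-veering. Writing $h = D_{c_1}^{-1} \cdots D_{c_k}^{-1}$ with each $c_i$ homotopically nontrivial, the inverse $h^{-1} = D_{c_k} \cdots D_{c_1}$ is a product of right-handed Dehn twists and hence lies in $Dehn^+(S,\partial S) \subset Veer(S,\partial S)$ by the submonoid inclusion from \cite{hkm} recalled just before Theorem \ref{Th3}. Suppose for contradiction that $h$ too is right-veering; the strategy is then to show that $h$ and $h^{-1}$ being simultaneously right-veering forces $h$ to be isotopic to the identity, which contradicts the (implicit) assumption that the monodromy is nontrivial.

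Given any properly embedded arc $\alpha$ with endpoint $x \in \partial S$, right-veeringness of $h$ gives that $h(\alpha)$ is to the right of $\alpha$ at $x$. Right-veeringness of $h^{-1}$ gives that $h^{-1}(\alpha)$ is to the right of $\alpha$ at $x$; pushing this relation forward by $h$, which is orientation-preserving and fixes the basepoint $x \in \partial S$, preserves the tangent-vector condition defining ``to the right of'' and therefore yields that $\alpha = h(h^{-1}(\alpha))$ is to the right of $h(\alpha)$ at $x$. Hence at $x$ each of $h(\alpha)$ and $\alpha$ is to the right of the other; the definition then forces them to be isotopic rel endpoints. Since the arc and endpoint were arbitrary, $h$ fixes every isotopy class of properly embedded arc, and the Alexander method implies $h$ is isotopic to the identity rel $\partial S$, contradicting nontriviality.

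The main technical point I expect to pin down carefully is the antisymmetry: that if $\alpha$ and $\beta$ share a boundary endpoint and each is to the right of the other there, they must be isotopic rel endpoints. This follows from the tangent-vector definition once the arcs are isotoped to have minimal intersection, since two distinct isotopy classes give non-parallel initial tangent vectors at $x$ and $(\beta',\alpha')$ and $(\alpha',\beta')$ cannot both define the orientation of $S$. Everything else in the argument (the inclusion $Dehn^+ \subset Veer$, the diffeomorphism-invariance of ``to the right of,'' and the Alexander method for reconstructing a mapping class from its action on arc classes) is either quoted or standard surface topology.
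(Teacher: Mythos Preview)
Your proposal does not prove Theorem~\ref{Th3}; it \emph{uses} Theorem~\ref{Th3} as an input. Theorem~\ref{Th3} is the main result of~\cite{hkm} and is only quoted in this paper, with no proof given here to compare against. What you have actually written is a proof of Theorem~\ref{cor1}, the paper's main theorem.

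Read as a proof of Theorem~\ref{cor1}, your argument is correct but takes a genuinely different route from the paper's. The paper first exhibits, for a single left twist $t_c^{-1}$ about a nontrivial curve, an explicit arc that is moved to the left (Lemma~\ref{r6}); it then uses the monoid property of $Veer(S,\partial S)$ to deduce that $t_{c_1}^{-1}f^{-1}\notin Veer$ whenever $f\in Veer$ (Lemma~\ref{r7}), and applies this with $f=t_{c_n}\cdots t_{c_2}$. You instead prove the general principle $Veer(S,\partial S)\cap Veer(S,\partial S)^{-1}=\{\mathrm{id}\}$ via antisymmetry of ``to the right of'' together with the Alexander method, and then combine it with $h^{-1}\in Dehn^+\subset Veer$. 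Your approach is more conceptual and avoids any explicit arc or picture; the paper's approach is more hands-on and, as a byproduct, shows directly that $h\notin Veer$ and hence $h\neq\mathrm{id}$. In your argument the nontriviality of $h$ must be taken as a hypothesis (as the abstract indeed states), and you should be aware that this hypothesis is not free: that a nonempty product of left twists about nontrivial curves on a bordered surface cannot be isotopic to the identity is itself a consequence of the paper's Lemmas~\ref{r6} and~\ref{r7}, not something obvious a priori.
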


Throughout this paper, $t_{x}$ denotes the right Dehn twist about the simple closed curve $x$
and $(t_{x})^{-1}$ denotes the left Dehn twist about $x$. We consider the diffeomorphisms of a
surface up to isotopy.

\section{Proof of The Main Theorem}

\begin{lemma}\label{r5}
Let $S$ be a compact connected oriented surface with boundary and $f,h:S\rightarrow S$ be
two orientation preserving diffeomorphisms restricting to the identity on $\partial S$.
If $h$ is right-veering, then $fhf ^ {-1}$ is right-veering.

\end{lemma}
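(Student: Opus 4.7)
The plan is to transport the right-veering condition through conjugation by $f$, using only two facts: $f$ fixes $\partial S$ pointwise (so every boundary basepoint is preserved), and $f$ is orientation-preserving (so the ``right of'' relation at each boundary point is preserved).

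First I would fix an arbitrary basepoint $x \in \partial S$ and an arbitrary properly embedded arc $\alpha$ based at $x$, and set $\gamma = f^{-1}(\alpha)$. Since $f^{-1}$ also restricts to the identity on $\partial S$, we have $f^{-1}(x)=x$, so $\gamma$ is a properly embedded arc based at $x$. Applying the hypothesis that $h$ is right-veering to $\gamma$, I get that $h(\gamma)$ is to the right of $\gamma$ at $x$. Explicitly, after isotoping rel endpoints so that they meet transversely in the minimal number of points and are transverse to $\partial S$, the tangent vectors $(h(\gamma)', \gamma')$ at $x$ agree with the orientation of $S$, or $h(\gamma)=\gamma$.

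Second, I apply $f$ to the pair $\gamma, h(\gamma)$. Since $f$ is a diffeomorphism, the pair $f(\gamma)=\alpha$ and $f(h(\gamma)) = fhf^{-1}(\alpha)$ again meet transversely in the minimal number of intersection points and are transverse to $\partial S$. Both arcs are based at $x=f(x)$. At the level of tangent vectors at $x$, passing from $\gamma,h(\gamma)$ to $\alpha, fhf^{-1}(\alpha)$ amounts to applying the linear isomorphism $df_x \colon T_xS \to T_xS$. Because $f$ is orientation preserving, $\det(df_x) > 0$, so $df_x$ sends positively oriented bases of $T_xS$ to positively oriented bases. Thus the ordered pair $((fhf^{-1}(\alpha))', \alpha')$ represents the orientation of $S$ at $x$ whenever $(h(\gamma)', \gamma')$ does, and equality $h(\gamma)=\gamma$ transports to $fhf^{-1}(\alpha)=\alpha$. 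In either case $fhf^{-1}(\alpha)$ is to the right of $\alpha$ at $x$.

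Since $x$ and $\alpha$ were arbitrary, $fhf^{-1}$ is right-veering, completing the proof. The only conceptual point that needs care is the second step: one has to be clear that ``right of'' is intrinsically a condition on the pair of tangent directions of the two arcs at $x$ in the oriented surface $S$, so it is automatically invariant under any orientation-preserving diffeomorphism fixing $x$. I do not expect any genuine obstacle beyond making this observation explicit.
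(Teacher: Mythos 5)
Your proof is correct and follows essentially the same route as the paper: both transport the arc through $f^{-1}$, apply the right-veering hypothesis for $h$, and carry the conclusion back via $f$, using that $f$ fixes $\partial S$ pointwise and preserves orientation. The only difference is presentational --- the paper argues contrapositively (assuming $fhf^{-1}$ is not right-veering and deriving a contradiction) and leaves implicit the point you spell out, namely that minimal position and the ``to the right'' condition on tangent directions are invariant under an orientation-preserving diffeomorphism fixing $x$.
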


\begin{proof}
Suppose that $ fhf ^ {-1}$ is not right-veering. By definition, there is a point
$x \in \partial S $ and a properly embedded arc $\alpha$ starting at $x$ such that
$ fhf^{-1}(\alpha) $ is to the left of $ \alpha $ at $x$. This implies that, for
an arc $f^{-1}(\alpha) $, $hf^{-1}(\alpha) $ is to the left of $f^{-1}(\alpha)$ at $x $
which contradicts to the assumption that $h$ is a right-veering diffeomorphism.
\end{proof}

\begin{lemma}\label{r6}
Let $c$ be a homotopically nontrivial simple closed curve on an oriented surface $S$ with boundary.
Then the left Dehn twist $t_{c}^{-1} $ is not a right-veering diffeomorphism.

\end{lemma}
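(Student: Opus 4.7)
The plan is a short proof by contradiction, leveraging the fact recalled just before the lemma that right Dehn twists are right-veering.

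Suppose toward a contradiction that $t_{c}^{-1}$ is right-veering. Then for every properly embedded arc $\beta$ based at any point $x \in \partial S$, the image $t_{c}^{-1}(\beta)$ lies to the right of $\beta$ at $x$. Applying the right-veering property of $t_{c}$ itself to the arc $\alpha = t_{c}^{-1}(\beta)$ (also based at $x$, since $t_{c}^{-1}$ is the identity on $\partial S$) gives $t_{c}(\alpha) = \beta$ to the right of $\alpha = t_{c}^{-1}(\beta)$ at $x$. By the definition of ``to the right of,'' each of these conditions reduces to either an equality of isotopy classes rel endpoints or a condition on the ordered pair of tangent vectors at $x$. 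The two ordered pairs of tangent vectors are swaps of each other, and a swap reverses orientation, so both tangent conditions cannot hold simultaneously. Therefore $t_{c}^{-1}(\beta)$ must be isotopic to $\beta$ rel endpoints, for every choice of $\beta$ and basepoint.

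Hence $t_{c}^{-1}$ fixes every isotopy class of properly embedded arc with endpoints on $\partial S$. I would then invoke the standard surface-topology fact that any mapping class in $Mod(S,\partial S)$ acting trivially on the set of isotopy classes of properly embedded arcs is the identity: one chooses a system of disjoint properly embedded arcs that cuts $S$ open into a disk, isotopes a chosen representative to fix each cutting arc pointwise, and then applies the Alexander trick on the resulting disk. This forces $t_{c}^{-1}$, and therefore $t_{c}$, to be the identity in $Mod(S,\partial S)$.

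This contradicts the classical fact that a Dehn twist about a homotopically nontrivial simple closed curve is non-trivial in the mapping class group (the Dehn twist $t_{c}$ represents the identity only when $c$ bounds a disk). The principal obstacle in executing this plan is justifying the invoked fact that a mapping class fixing every arc up to isotopy must be trivial; once that is granted, the remaining argument is a one-line manipulation combining the right-veering property of $t_{c}$ with the hypothetical right-veering of $t_{c}^{-1}$.
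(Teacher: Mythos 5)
Your proof is correct, but it takes a genuinely different route from the paper's. The paper argues by change of coordinates: using the classification of surfaces it finds a diffeomorphism $f$ carrying $c$ to one of finitely many model curves $a$ (its Figure 1), observes that $t_c^{-1}$ is conjugate to $t_a^{-1}$, and then exhibits an explicit properly embedded arc $\alpha$ with $t_a^{-1}(\alpha)$ to the left of $\alpha$. You instead give a picture-free argument that actually proves the stronger statement $Veer(S,\partial S)\cap Veer(S,\partial S)^{-1}=\{1\}$: since $t_c\in Dehn^+(S,\partial S)\subset Veer(S,\partial S)$ (a fact from \cite{hkm} that the paper itself quotes), a hypothetical right-veering $t_c^{-1}$ combined with the antisymmetry of ``to the right'' (swapping the ordered pair of tangent vectors at $x$ reverses orientation, and the same minimal-position representatives serve for both comparisons) forces $t_c^{-1}$ to fix every arc class rel endpoints; the Alexander method then gives $t_c=1$ in $Mod(S,\partial S)$, contradicting the classical fact that a twist about a homotopically essential curve (including boundary-parallel ones) has infinite order. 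Both imported facts are standard, but one caution is worth making explicit: faithfulness of the action on arcs holds only when arcs are taken up to isotopy \emph{fixing endpoints} --- exactly the paper's convention --- since a boundary-parallel twist fixes all arc classes if endpoints may slide along $\partial S$. What each approach buys: the paper's proof is self-contained modulo one figure but requires the case analysis of model curves; yours avoids all figures and case analysis and yields the general principle that no nontrivial mapping class is right-veering together with its inverse. You could also shortcut your last two steps: instead of the full Alexander method, choose a single properly embedded arc $\alpha$ meeting $c$ essentially (such an arc exists for any homotopically nontrivial simple closed curve on a compact surface with boundary, including the boundary-parallel case) and note that $t_c^{-1}(\alpha)$ is not isotopic to $\alpha$ rel endpoints by the bigon criterion, directly contradicting the conclusion that all arc classes are fixed.
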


\begin{proof}
By the classification of surfaces, there is a diffeomorphism $f$ of $S$ such that $f(c)$ is the curve $a$
in one of pictures shown in Figure~\ref{sep1}. It follows that $t_c^{-1}$ is conjugate to $t_a^{-1}$. Hence,
in order to finish the proof, it suffices to show that $t_a^{-1}$ is not right veering.

In the figure, a properly embedded arc $\alpha$ is presented so that $t_a^{-1}(\alpha)$
is to the left of $\alpha$, which shows that $t_a^{-1}$ is not right veering.
\end{proof}

\begin{figure}[hbt]
   \centering
        \includegraphics[width=10cm]{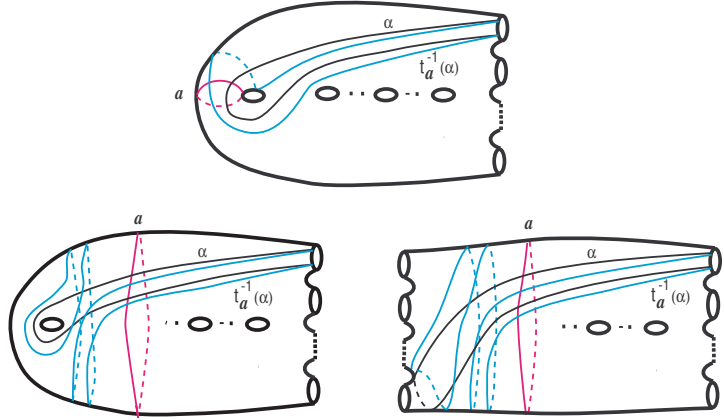}
   \caption{Any homotopically nontrivial simple closed curve is topologically equivalent to one of these curves.}
      \label{sep1}
\end{figure}

\begin{lemma}\label{r7}
Let $a$ be a homotopically nontrivial simple closed curve on an oriented surface $S$ with boundary. If $f \in Veer(S,\partial S) $, then $ t_{a}^{-1}f^{-1} $ is not right-veering.
\end{lemma}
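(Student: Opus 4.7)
The plan is to argue by contradiction using that $\mathrm{Veer}(S,\partial S)$ is a monoid, together with Lemma~\ref{r6}.

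Suppose, towards a contradiction, that $t_a^{-1} f^{-1}$ were right-veering. Since $f$ is right-veering by hypothesis and the set of right-veering mapping classes is closed under composition (this is precisely the monoid property of $\mathrm{Veer}(S,\partial S)$ recalled in Section~2), the product $(t_a^{-1} f^{-1}) \circ f$ would again lie in $\mathrm{Veer}(S,\partial S)$. But this product simplifies to $t_a^{-1}$, which by Lemma~\ref{r6} is \emph{not} right-veering because $a$ is homotopically nontrivial. This contradicts the assumption, so $t_a^{-1} f^{-1}$ cannot be right-veering.

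There is essentially no obstacle here: the whole argument is a one-line application of the monoid property combined with Lemma~\ref{r6}. The only point one should be careful about is the order of composition and the associated convention for products in $\mathrm{Mod}(S,\partial S)$, ensuring that $(t_a^{-1} f^{-1}) \circ f = t_a^{-1}$ rather than some conjugate. (Had the conjugate appeared instead, one could still conclude using Lemma~\ref{r5}, which tells us that right-veering is preserved under conjugation, together with Lemma~\ref{r6}.) So the statement of the lemma falls out immediately from the two preceding lemmas and the monoid structure.
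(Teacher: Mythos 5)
Your proof is correct and is essentially identical to the paper's own argument: assume $t_a^{-1}f^{-1}$ is right-veering, compose with $f$ using the monoid property of $Veer(S,\partial S)$ to conclude $t_a^{-1}$ would be right-veering, contradicting Lemma~\ref{r6}. Your side remark on composition conventions (and the fallback via Lemma~\ref{r5}) is a sensible precaution but not needed, since $(t_a^{-1}f^{-1})\circ f = t_a^{-1}$ on the nose.
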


\begin{proof}
Suppose $ t_{a}^{-1}f^{-1} $ is right-veering. $ Veer(S,\partial S) $ is closed under
composition since it is a monoid. Then $ t_{a}^{-1}f^{-1}f $ is a right-veering
diffeomorphism. On the other hand, $ t_{a}^{-1} = t_{a}^{-1}f^{-1}f $ and in Lemma \ref{r6}
we showed that it is not right-veering which contradicts to the assumption.
\end{proof}

\subsection{Proof of Theorem \ref{cor1}}

Let the monodromy of the open book $(S,h)$ of $(Y,\xi)$ be
$h=t_{c_{1}}^{-1}t_{c_{2}}^{-1} \cdots t_{c_{n}}^{-1}$ such that each $c_i$ is homotopically nontrivial, where $ 1 \leq i \leq n $. By Lemma \ref{r6},  $t_{c_{1}}^{-1} $ is not a right-veering diffeomorphism and $ (t_{c_{2}}^{-1} \cdots t_{c_{n-1}}^{-1} t_{c_{n}}^{-1})^{-1} = t_{c_{n}}t_{c_{n-1}} \cdots t_{c_{2}} \in Veer(S,\partial S) $. Then by Lemma \ref{r7}, $h$ is not
right veering. Hence the contact structure $ \xi $ which is supported by the open book $(S,h)$ is overtwisted by Theorem \ref{Th3}. \ \ \ \ \ \ \ \ \ \ \ \ \ \ \ \ \ \ \ \ \ \ \ \ \ \ \ \ \ \ \ \ \ \ \ \ \ \ \ \ \ \ \ \ \ \ \ \ \ \ \ \ \ \ \ $\square$

\begin{remark}
We firstly tried to prove Theorem \ref{cor1}, by using the \emph {sobering arc} argument introduced
by Goodman in ~\cite{g}. In order to find a sobering arc, we must find an arc which has always negative intersections in the interior. However this is not the case for the diffeomorphisms consisting of Dehn twists about the separating curves.
\end{remark}

\begin{remark}

The recent article of  Harvey-Kawamuro-Plamenevskaya ~\cite{pl}
appeared at the arxiv contains a result similar to ours.
Their result states
that if the monodromy is presented as product of left Dehn twists about the standard
generators (nonseparating curves), then the corresponding contact structure is overtwisted. Since our proof also covers the cases where some of the curves are separating, we decided to post it.

\end{remark}


\begin{thebibliography}{negative}


\bibitem[HKM]{hkm}
K.~Honda, W.H.~Kazez, G.~Mati\'c,
{\em  Right-veering diffeomorphisms of compact surfaces with boundary I},
Arxiv:math.GT/0510639. to appear in Invent. Math


\bibitem[HKP]{pl}
S.~Harvey, K.~Kawamuro, O.~Plamenevskaya
{\em On Transverse knots and Branched Covers},
Arxiv:math.GT/0712.1557v1.



\bibitem[G]{g}
N.~Goodman,
{\em Overtwisted open books from sobering arcs},
Algebraic \& Geometric Topology, Volume 5 (2005) 1173-1195.



\end{thebibliography}
\end{document}